\documentclass[11pt]{amsart}
\usepackage[english]{babel}
\usepackage{amssymb}
\usepackage{amsmath}
\usepackage{amsthm}
\usepackage{bbold}
\usepackage{bbm}
\usepackage{mathtools}
\usepackage{dsfont}
\usepackage{color}
\newtheorem{thm}{Theorem}
\newtheorem{prop}{Proposition}

\newtheorem{exam}{Example}
\newtheorem{cor}{Corollary}

\newtheorem{defi}{Definition}
\newtheorem{ques}{Question}
\numberwithin{equation}{section}
\DeclareSymbolFont{bbold}{U}{bbold}{m}{n}

\DeclareMathOperator{\AS}{\xrightarrow[]{a.e.}}

\DeclareMathOperator{\uoc}{\xrightarrow[]{uo}}
\DeclareMathOperator{\oc}{\xrightarrow[]{o}}

\setlength{\parindent}{0in}
\renewcommand{\le}{\leqslant}
\renewcommand{\ge}{\geqslant}

\begin{document}

\title{AMS Journal Sample}
    \author{E. Y. Emelyanov$^{1, 2}$}
    \address{$^1$ Middle East Technical University, 06800 Ankara, Turkey}
    \email{eduard@metu.edu.tr}
    \address{$2$ Sobolev Institute of Mathematics, 630090 Novosibirsk, Russia}
    \email{emelanov@math.nsc.ru}
    \author{M. A. A. Marabeh$^3$}
    \address{$^3$Department of Applied Mathematics, College of Sciences and Arts, Palestine Technical University-Kadoorie, Tulkarem, Palestine}
    \email{mohammad.marabeh@ptuk.edu.ps, m.maraabeh@gmail.com}

    \keywords{$a.e.-$convergence, Brezis -- Lieb lemma, Banach lattice, $uo-$convergence, Brezis -- Lieb space, Brezis -- Lieb property}
    \subjclass[2010]{46A19, 46B42, 46E30}
    \date{\today}

\title{Internal characterization of Brezis -- Lieb spaces}

\begin{abstract}
In order to find an extension of Brezis -- Lieb's lemma to the case of nets, we
replace the almost everywhere convergence by the unbounded order convergence and introduce
the Brezis -- Lieb property in normed lattices. Then we identify a wide class of
Banach lattices in which the Brezis -- Lieb lemma holds true. Among other things, it gives an extension of the Brezis -- Lieb lemma for nets in $L^p$ for $p\in [1,\infty)$.
\end{abstract}

\maketitle
\date{\today}

\section{Introduction}

Let $(\Omega,\Sigma,\mu)$ be a measure space in which, for every set $A\in\Sigma$, $\mu(A)>0$, there exists
$\Sigma\ni A_0\subseteq A$, such that $0<\mu(A_0)<\infty$. Given $p\in(0,\infty)$, denote by
$\mathcal{L}^p=\{f: \int\limits_{\Omega}|f|^p\mu<\infty\}$ the vector space of $p$-integrable functions
from $\Omega$ into $\mathbb{C}$. The Brezis -- Lieb lemma \cite[Thm.1]{BL1} is known as the
following useful refinement of the Fatou lemma.
\begin{thm}[Brezis -- Lieb's lemma for $\mathcal{L}^p$ $(0<p<\infty)$]\label{Thm1}
Suppose $f_n\AS f$ and $\int\limits_{\Omega}|f_n|^pd\mu\le C<\infty$ for all $n$ and some $p\in(0,\infty)$. Then
\begin{equation}\label{1.3}
  \lim\limits_{n\to\infty}\int\limits_{\Omega}(|f_n|^p-|f_n-f|^p)d\mu=\int\limits_{\Omega}|f|^pd\mu.
\end{equation}
\end{thm}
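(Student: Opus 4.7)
The plan is to reduce \eqref{1.3} to the stronger $L^1$-convergence
\[\int_\Omega \bigl||f_n|^p - |f_n-f|^p - |f|^p\bigr| d\mu \longrightarrow 0.\]
Since Fatou's lemma applied to $|f_n|^p \to |f|^p$ immediately yields $\int_\Omega |f|^p d\mu \le C < \infty$, the identity \eqref{1.3} will follow once this $L^1$-convergence is established. Write $W_n := \bigl||f_n|^p - |f_n-f|^p - |f|^p\bigr|$.

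The heart of the argument is a sharp pointwise inequality: for every $\varepsilon > 0$ there exists $C_\varepsilon > 0$ such that
\[\bigl||a+b|^p - |a|^p - |b|^p\bigr| \le \varepsilon |a|^p + C_\varepsilon |b|^p \qquad (a,b \in \mathbb{C}).\]
I would prove this by homogeneity: dividing by $|a|^p$ (for $a \ne 0$) reduces the claim to the one-variable estimate $g(t) \le \varepsilon + C_\varepsilon |t|^p$ for $g(t) := \bigl||1+t|^p - 1 - |t|^p\bigr|$. Continuity of $g$ at $0$ handles small $|t|$, and the asymptotics $g(t) = O(|t|^p)$ as $|t| \to \infty$ handle the tail.

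Substituting $a = f_n - f$ and $b = f$ yields $W_n \le \varepsilon|f_n-f|^p + C_\varepsilon|f|^p$, so the truncation $W_n^\varepsilon := (W_n - \varepsilon|f_n-f|^p)^+$ is dominated by the integrable function $C_\varepsilon |f|^p$ and tends to $0$ almost everywhere (because $f_n \to f$ a.e.\ forces $W_n \to 0$ a.e.). Dominated convergence then gives $\int_\Omega W_n^\varepsilon d\mu \to 0$. Combining with
\[\int_\Omega W_n d\mu \le \int_\Omega W_n^\varepsilon d\mu + \varepsilon \int_\Omega |f_n - f|^p d\mu\]
and a uniform bound on $\int_\Omega |f_n - f|^p d\mu$ (obtained from the triangle inequality when $p \ge 1$, or from $|a-b|^p \le |a|^p + |b|^p$ when $0 < p < 1$, together with the hypothesis on $\|f_n\|_p$ and the Fatou bound on $\|f\|_p$), I obtain $\limsup_n \int_\Omega W_n d\mu \le \varepsilon M$ with $M$ independent of $\varepsilon$. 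Letting $\varepsilon \to 0$ finishes the proof.

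The principal obstacle is the pointwise inequality itself: replacing a naive estimate with a uniform constant $K_p$ in front of $|a|^p$ by one with an arbitrarily small coefficient $\varepsilon$ is exactly what upgrades Fatou to Brezis--Lieb and enables the $\varepsilon \to 0$ passage at the end; once that is in hand, the rest is a careful deployment of dominated convergence.
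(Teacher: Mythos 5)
The paper does not prove Theorem \ref{Thm1} itself but simply cites it from Brezis--Lieb's original article, and your argument is precisely the proof given there: the $\varepsilon$-refined pointwise inequality $\bigl||a+b|^p-|a|^p-|b|^p\bigr|\le\varepsilon|a|^p+C_\varepsilon|b|^p$, the truncation $W_n^\varepsilon$, dominated convergence, and the final $\varepsilon\to 0$ passage. Your proof is correct and complete in outline, so there is nothing to add.
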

As the following example shows, Theorem \ref{Thm1} does not have a reasonable direct generalization for nets.
\begin{exam}\label{direct generalization is impossible}
Consider $[0,1]\subset\mathbb{R}$ with the Lebesgue measure $\mu$. Let $\Delta$ be the family
of all finite subsets of $[0,1]$ ordered by inclusion, and $\mathbb{I}_F$ be the indicator function of $F\in\Delta$.
Then $\mathbb{I}_F\AS\mathbb{I}_{[0,1]}$ and $\int\limits_0^1|\mathbb{I}_F|d\mu=0$, however
$$
  \lim\limits_{F\to\infty}\int\limits_0^1(|\mathbb{I}_F|-|\mathbb{I}_F-\mathbb{I}_{[0,1]}|)d\mu=
    \lim\limits_{F\to\infty}\int\limits_0^1(-|\mathbb{I}_{[0,1]}|)d\mu=-1\ne 1=\int\limits_0^1|\mathbb{I}_{[0,1]}|d\mu.
$$
\end{exam}
In order to avoid the collision, we restate Theorem \ref{Thm1} in the case of $1\le p<\infty$
in terms of the Banach space $L^p$ of equivalence classes of functions from $\mathcal{L}^p(\mu)$ w.r. to $\mu$
(cf. \cite[Thm.2]{N1}).
\begin{thm}[Brezis -- Lieb's lemma for $L^p$ $(1\le p<\infty)$]\label{Thm2}
Let $\mathbf{f}_n\AS \mathbf{f}$ in $L^p(\mu)$ and $\|\mathbf{f}_n\|_p\to\|\mathbf{f}\|_p$, where
$\|\mathbf{f}_n\|_p:=\bigg[{\int\limits_{\Omega}|f_n|^pd\mu}\bigg]^{1/p}$ with
$f_n\in\mathcal{L}^p(\mu)$ and $f_n\in\mathbf{f}_n$. Then $\|\mathbf{f}_n-\mathbf{f}\|_p\to 0$.
\end{thm}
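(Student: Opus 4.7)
The plan is to reduce Theorem \ref{Thm2} to a direct application of Theorem \ref{Thm1}. Fix representatives $f_n \in \mathbf{f}_n$ and $f \in \mathbf{f}$ with $f_n \AS f$, which is possible since the a.e.\ convergence $\mathbf{f}_n \AS \mathbf{f}$ in $L^p$ is defined exactly by such a choice of representatives. Since $\|\mathbf{f}_n\|_p \to \|\mathbf{f}\|_p$, the sequence $\big(\int_\Omega |f_n|^p d\mu\big)_n$ is convergent, hence bounded by some $C < \infty$. The hypotheses of Theorem \ref{Thm1} are thus satisfied.

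Applying Theorem \ref{Thm1} yields
\begin{equation*}
  \lim_{n\to\infty} \int_\Omega \bigl(|f_n|^p - |f_n - f|^p\bigr)\, d\mu \;=\; \int_\Omega |f|^p d\mu \;=\; \|\mathbf{f}\|_p^p.
\end{equation*}
Since $\int_\Omega |f_n|^p d\mu = \|\mathbf{f}_n\|_p^p \to \|\mathbf{f}\|_p^p$, subtracting gives
\begin{equation*}
  \lim_{n\to\infty} \|\mathbf{f}_n - \mathbf{f}\|_p^p \;=\; \lim_{n\to\infty} \int_\Omega |f_n - f|^p d\mu \;=\; \|\mathbf{f}\|_p^p - \|\mathbf{f}\|_p^p \;=\; 0,
\end{equation*}
which is the desired conclusion.

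There is essentially no obstacle: the whole content of Theorem \ref{Thm2} is packaged in Theorem \ref{Thm1}, and the restatement merely extracts the norm convergence consequence. The only technicality is the passage between equivalence classes in $L^p$ and their a.e.\ representatives in $\mathcal{L}^p$, which is routine since both a.e.\ convergence and the $L^p$-norm are well-defined on equivalence classes. The reason the result is formulated on $L^p$ rather than $\mathcal{L}^p$ is precisely to pave the way for replacing a.e.\ convergence with a lattice-theoretic notion (such as unbounded order convergence) in the net-valued extension announced in the abstract.
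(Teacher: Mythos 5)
Your proof is correct and follows exactly the route the paper intends: Theorem \ref{Thm2} is presented there as a restatement/consequence of Theorem \ref{Thm1} (with a citation to Niculescu in place of a written-out argument), and the deduction you give --- bounding $\int_\Omega|f_n|^p\,d\mu$ by convergence of the norms, applying Theorem \ref{Thm1}, and subtracting the two convergent sequences of finite integrals --- is the standard one. No issues.
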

Although in Theorem \ref{Thm2}, we still have $a.e.-$convergent sequences in $L^p$, it is possible now
(e.g. due to \cite[Prop.3.1]{GTX1}) to replace the $a.e.-$convergence by the $uo-$conver\-gence
and restate Theorem \ref{Thm1} once more (cf. also \cite[Prop.2.2]{EM1} and \cite[Prop.1.5]{M1}) as follows.
\begin{thm}[Brezis -- Lieb's lemma for $uo-$convergent sequences in $L^p$]\label{Thm3}
Let $x_n\uoc x$ in $L^p$, where $p\in[1,\infty)$. If $\|x_n\|_p\to\|x\|_p$ then $\|x_n-x\|_p\to 0$.
\end{thm}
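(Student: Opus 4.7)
The plan is to reduce Theorem \ref{Thm3} to the classical Theorem \ref{Thm1} via the fact that, for sequences in $L^p$, $uo$-convergence is equivalent to $a.e.$-convergence of suitable representatives. This correspondence is exactly \cite[Prop.3.1]{GTX1}, which is cited just above the statement; once it is in hand, the rest of the argument is a one-line cancellation.

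First, from $x_n \uoc x$ in $L^p$ I would pick $f_n, f \in \mathcal{L}^p(\mu)$ representing the equivalence classes $x_n, x$ and satisfying $f_n \AS f$. Since only countably many functions are involved, their supports are contained in a $\sigma$-finite subset of $(\Omega,\Sigma,\mu)$, so the cited correspondence applies with no additional hypothesis on the ambient measure space. The assumption $\|x_n\|_p \to \|x\|_p$ yields, in particular, a uniform bound $\|f_n\|_p \le C < \infty$, so Theorem \ref{Thm1} is applicable and gives
\begin{equation*}
\lim_{n\to\infty}\int_{\Omega}\bigl(|f_n|^p - |f_n-f|^p\bigr)\,d\mu = \int_{\Omega}|f|^p\,d\mu.
\end{equation*}

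Writing
\begin{equation*}
\int_{\Omega}|f_n-f|^p\,d\mu = \int_{\Omega}|f_n|^p\,d\mu - \int_{\Omega}\bigl(|f_n|^p - |f_n-f|^p\bigr)\,d\mu,
\end{equation*}
the first summand on the right converges to $\int_{\Omega}|f|^p\,d\mu$ by the norm hypothesis, and the second converges to the same quantity by the previous display; subtracting these limits yields $\|x_n-x\|_p^p \to 0$, as desired.

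The only conceptual obstacle is the transition from the lattice-theoretic notion $\uoc$ to an $a.e.$-statement about concrete representatives, and this is precisely what \cite[Prop.3.1]{GTX1} supplies; after that the algebraic identity above finishes the proof with no further estimates.
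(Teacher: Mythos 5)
Your proposal is correct and follows essentially the same route as the paper, which likewise obtains Theorem \ref{Thm3} by using \cite[Prop.3.1]{GTX1} to identify $uo$-convergence of sequences in $L^p$ with $a.e.$-convergence of representatives and then invoking the classical Brezis--Lieb lemma (Theorem \ref{Thm1}, via its restatement as Theorem \ref{Thm2}). The cancellation identity you write out is exactly the standard passage from Theorem \ref{Thm1} to Theorem \ref{Thm2}, and your remark on reducing to a $\sigma$-finite part of the measure space is a sound way to justify applying the cited correspondence.
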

Notice that Theorem \ref{Thm3} is a result of the Banach space theory which does not involve the measure theory directly. This observation
motivates us to investigate those Banach lattices in which the statement of Theorem \ref{Thm3} holds true.
We call them by the $\sigma-$Brezis -- Lieb spaces in Definition \ref{BL-space}. After introducing a geometrical
property of normed lattices in Definition \ref{BL-property}, we prove Theorem \ref{BLS} which is
the main result of the present paper. Theorem \ref{BLS} gives an internal geometric characterization of
$\sigma-$Brezis -- Lieb's spaces and implies immediately the following result.
\begin{prop}\label{BL for nets in L^p}
Let $\mathbf{f}_\alpha\uoc\mathbf{f}$ in $L^p(\mu)$ $(1\le
p<\infty)$, and $\|\mathbf{f}_\alpha\|_p\to\|\mathbf{f}\|_p$. Then
$\|\mathbf{f}_\alpha-\mathbf{f}\|_p\to 0$.
\end{prop}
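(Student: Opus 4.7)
The plan is to obtain Proposition \ref{BL for nets in L^p} as a direct corollary of the paper's main result, Theorem \ref{BLS}, which gives an internal geometric characterization of $\sigma$-Brezis -- Lieb spaces via the Brezis -- Lieb property of Definition \ref{BL-property}. The key observation that makes this strategy work is that the BL-property is phrased purely in terms of the lattice and norm structure of the ambient Banach lattice, with no restriction on the indexing set of the net in question; once $L^p(\mu)$ is identified as a member of the relevant class, the passage from sequences to nets is automatic.

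The first step is to certify that $L^p(\mu)$ with $1\le p<\infty$ is a $\sigma$-Brezis -- Lieb space in the sense of Definition \ref{BL-space}. This is already available: it is a verbatim restatement of Theorem \ref{Thm3}, which asserts that for any sequence $x_n\uoc x$ in $L^p$ with $\|x_n\|_p\to\|x\|_p$ one has $\|x_n-x\|_p\to 0$. Applying the ``only if'' direction of Theorem \ref{BLS} then yields that $L^p(\mu)$ satisfies the geometric Brezis -- Lieb property.

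The second step is to feed the given net into this property. Since Definition \ref{BL-property} is formulated without any countability restriction, one may apply it to $(\mathbf{f}_\alpha)$ with the premises $\mathbf{f}_\alpha\uoc\mathbf{f}$ and $\|\mathbf{f}_\alpha\|_p\to\|\mathbf{f}\|_p$ and read off the conclusion $\|\mathbf{f}_\alpha-\mathbf{f}\|_p\to 0$. The only mildly delicate point is to align the exact premises of Definition \ref{BL-property} with those of the proposition; if necessary this can be handled by translating to the uo-null net $\mathbf{g}_\alpha:=\mathbf{f}_\alpha-\mathbf{f}$, which reduces the hypotheses to a statement about a net tending to $0$ in the unbounded order sense.

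The main obstacle -- and the reason Theorem \ref{BLS} does the real work -- is that uo-convergence of nets in $L^p(\mu)$ cannot in general be reformulated as $a.e.$-convergence (Example \ref{direct generalization is impossible} illustrates how naive attempts fail), so one cannot simply rerun the measure-theoretic proof of Theorem \ref{Thm1}. This is precisely why an \emph{internal} characterization of the Brezis -- Lieb phenomenon is valuable: once Theorem \ref{BLS} is in hand, the proof of Proposition \ref{BL for nets in L^p} reduces to citing it, with the hypothesis $\|\mathbf{f}_\alpha\|_p\to\|\mathbf{f}\|_p$ playing the role that $a.e.$-convergence plus a uniform bound played in the classical sequential setting.
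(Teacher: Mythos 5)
Your overall strategy is exactly the paper's: the authors deduce Proposition \ref{BL for nets in L^p} by combining Theorem \ref{Thm3} (which says $L^p$ is a $\sigma$-BL-space) with Theorem \ref{BLS} (in the form of Corollary \ref{o-cont BLS}, since $L^p$ is order continuous for $1\le p<\infty$), so that the sequence-to-net upgrade is carried entirely by the equivalence $(2)\Leftrightarrow(1)$. In that sense the proposal is correct and requires no new argument beyond citation.

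One point in your second step needs repair, though. You say one may ``feed the given net into'' Definition \ref{BL-property} and ``read off the conclusion'' because that definition has ``no countability restriction.'' This conflates two distinct notions: the \emph{BL-property} (Definition \ref{BL-property}) is a geometric condition about disjoint normalized \emph{sequences} $(u_n)$ in $E_+$ and says nothing about convergence of nets; the statement you actually need to apply to $(\mathbf{f}_\alpha)$ is that $E$ is a \emph{BL-space} (Definition \ref{BL-space}), i.e.\ the net-convergence implication itself. The correct chain is: $L^p$ is a $\sigma$-BL-space by Theorem \ref{Thm3}; $L^p$ is ($\sigma$-)Dedekind complete, so Theorem \ref{BLS} applies and its implication $(2)\Rightarrow(1)$ (equivalently, $(2)\Rightarrow(3)\Rightarrow(1)$, where $(3)$ also records the order continuity of the $L^p$ norm) yields that $L^p$ is a BL-space; then Definition \ref{BL-space} applied to the net $(\mathbf{f}_\alpha)$ gives $\|\mathbf{f}_\alpha-\mathbf{f}\|_p\to 0$. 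You should also state explicitly that $L^p$ satisfies the standing hypothesis of Theorem \ref{BLS} ($\sigma$-Dedekind completeness), or use Corollary \ref{o-cont BLS} with order continuity, since without one of these the cited equivalence is not available. With these corrections the proof is the paper's own.
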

It is worth mentioning that Proposition \ref{BL for nets in L^p} may
serve as a net-extension of the Brezis -- Lieb lemma (in its form of
Theorem \ref{Thm3}).

\section{Brezis -- Lieb spaces}

In this section we consider normed lattices over the complex field
$\mathbb{C}$. A vector space $E$ over $\mathbb{C}$ is said to be a
normed lattice if $E$ is a {\em complexification} of a uniformly
complete real normed lattice $F$ (see e.g. \cite[Def.3.17]{AA1}).
More precisely, the modulus of $z=x+iy\in E=F\oplus i F$ is defined
by
$$
  |z|=\sup\limits_{\theta\in[0,2\pi)}[x\cos\theta+y\sin\theta],
$$
and its norm is defined by $\|z\|=\|z\|_E:=\|\ |z| \ \|_F$ (cf.
\cite[p.104]{AA1}). We also adopt notations $E_+=F_+$,
$z=[z]_r+i[z]_i$, $x=\mathbb{Re}[z]$, and $y=\mathbb{Im}[z]$ for
$z=x+iy$ in $E$.

A net $v_\alpha$ in a vector lattice $E$ is said to be
$uo-$convergent to $v\in E$ whenever, for every $u\in E_+$, the net
$|v_\alpha-v|\wedge u$ converges in order to $0$.

For the further theory of vector lattices, we refer to
\cite{AB1,AA1} and, for the unbounded order convergence, to
\cite{GX1,GTX1}.

\begin{defi}\label{BL-space}
A normed lattice $(E,\|\cdot\|)$ is said to be a {\em Brezis -- Lieb space $($shortly, $BL-$space$)$}
$($resp. {\em $\sigma$-Brezis -- Lieb space $($$\sigma$-$BL-$space$)$}$)$ if, for any net $x_\alpha$
$($resp. for any sequence $x_n$$)$ in $X$ such that $\|x_\alpha\|\to\|x_0\|$ $($resp. $\|x_n\|\to\|x_0\|$$)$
and $x_\alpha\uoc x_0$ $($resp. $x_n\uoc x_0$$)$, we have $\|x_\alpha-x_0\|\to 0$ $($resp. $\|x_n-x_0\|\to 0$$)$.
\end{defi}
Trivially, any normed BL-space is a $\sigma$-BL-space, and any finite-dimensional normed lattice is a $BL$-space.
Furthermore, by \cite[Thm.3.2]{GTX1}, any regular sublattice $F$ of any normed BL-space ($\sigma$-BL-space) $E$ is itself
a BL-space ($\sigma$-BL-space). Taking into account the fact that the $a.e.-$con\-ver\-gence for sequences in $L^p$
coincides with the $uo-$conver\-gence \cite[Prop.3.1]{GTX1}, Theorem \ref{Thm3} says exactly
that $L^p$ is a $\sigma$-BL-space for $1\le p<\infty$.

Now, we consider examples of Banach lattices which are not $\sigma-$Brezis -- Lieb spaces.
\begin{exam}\label{c_0 is not sigma-BL-space}
The Banach lattice $(c_0,\|\cdot\|_{\infty})$ is not a $\sigma$-$BL-$space. To see this, take
$x_n=e_{2n}+\sum\limits_{k=1}^{n}\frac{1}{k}e_k$ and $x=\sum\limits_{k=1}^{\infty}\frac{1}{k}e_k$ in $c_0$.
Clearly, $\|x\|=\|x_n\|=1$ for all $n$ and $x_n\uoc x$, however $1=\|x-x_n\|$ does not converge to $0$.

We do not know whether or not for an arbitrary lattice norm
$\|\cdot\|$ in $c_0$, which is equivalent to $\|\cdot\|_{\infty}$,
the Banach lattice $(c_0,\|\cdot\|)$ is not a $\sigma$-$BL-$space.
\end{exam}

\begin{exam}\label{c and ellinfty are not sigma-BL-space}
Since $c_0$ is an order ideal in $c$ and in $\ell^{\infty}$, $c_0$ is regular there, and hence, both
Banach lattices $(c,\|\cdot\|_{\infty})$ and $(\ell^{\infty},\|\cdot\|_{\infty})$ are not $\sigma$-$BL-$spaces.

Accordingly to the fact, that $c_0$ is a regular sublattice of $c$ and to the last sentence of Example $\ref{c_0 is not sigma-BL-space}$,
it is also unknown whether or not the Banach lattice $(c,\|\cdot\|)$ is not a $\sigma$-$BL-$space for an arbitrary lattice norm $\|\cdot\|$
that is equivalent to $\|\cdot\|_{\infty}$.

In opposite to $c$, the Banach lattice $\ell^{\infty}$ is Dedekind complete. Let $\|\cdot\|$ be any lattice norm in $\ell^{\infty}$ that
is equivalent to $\|\cdot\|_{\infty}$. Clearly, the norm $\|\cdot\|$ is not order continuous. Therefore, by Theorem $\ref{BLS}$,
$(\ell^{\infty},\|\cdot\|)$ is not a $\sigma$-$BL-$space.
\end{exam}
A slight change of an infinite-dimensional BL-space can turn it into a normed lattice
which is not even a $\sigma-$BL-space.
\begin{exam}\label{BL-spaces are not stable}
Let $E$ be a normed lattice, $\dim(E)=\infty$. Let $F=\mathbb{C}\oplus_{\infty}E$.
Take any disjoint sequence $y_n$ in $E$ such that $\|y_n\|_E=1$ for all $n$. Then $y_n\uoc 0$ in $E$ \cite[Cor.3.6]{GTX1}.
Let $x_n=(1,y_n)\in F$. Then $\|x_n\|_F=\sup(1,\|y_n\|_E)=1$ and $x_n=(1,y_n)\uoc (1,0)=:x$ in $F$, however
$\|x_n-x\|_{F}=\|(0,y_n)\|_F=\|y_n\|_E=1$, and so $x_n$ does not converge to $x$ in $(F,\|\cdot\|_F)$.
Therefore $F=\mathbb{C}\oplus_{\infty}E$ is not a $\sigma-$BL-space.
\end{exam}
In order to characterize BL-spaces, we introduce the following definition.
\begin{defi}\label{BL-property}
A normed lattice $(E,\|\cdot\|)$ is said to have the {\em Brezis --
Lieb property $($shortly, $BL$-property$)$}, whenever
$\limsup\limits_{n\to\infty}\|u_0+u_n\|>\|u_0\|$ for any disjoint
normalized sequence $(u_n)_{n=1}^{\infty}$ in $E_+$ and for any
$u_0\in E_+$.
\end{defi}
Every finite dimensional normed lattice $E$ has the $BL-$property.
It is easy to see that the Banach lattices $c_0$, $c$, and
$\ell^{\infty}$ w.r. to the supremum norm $\|\cdot\|_{\infty}$ do
not have the $BL-$property. The modification of the norm in an
infinite-dimensional Banach lattice $E$ with the $BL-$property, as
in Example \ref{BL-spaces are not stable}, turns it into the Banach
lattice $F=\mathbb{C}\oplus_{\infty}E$ without the $BL-$property.
Indeed, take a disjoint normalized sequence $(y_n)_{n=1}^{\infty}$
in $E_+$. Let $u_0=(1,0)$ and $u_n=(0,y_n)$ for $n\ge 1$. Then
$(u_n)_{n=0}^{\infty}$ is a disjoint normalized sequence in $F_+$
with $\limsup\limits_{n\to\infty}\|u_0+u_n\|=1=\|u_0\|$.

Remarkably, it is not a coincidence. The following theorem identifies BL-spaces among $\sigma-$Dedekind complete Banach lattices.
\begin{thm}\label{BLS}
For a $\sigma-$Dedekind complete Banach lattice $E$, the following conditions are equivalent$:$\\
$(1)$ $E$ is a Brezis -- Lieb space$;$\\
$(2)$ $E$ is a $\sigma-$Brezis -- Lieb space$;$\\
$(3)$ $E$ has the Brezis -- Lieb property, and the norm in $E$ is order continuous.
\end{thm}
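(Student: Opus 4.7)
The plan is to prove the cycle $(1)\Rightarrow(2)\Rightarrow(3)\Rightarrow(1)$. The implication $(1)\Rightarrow(2)$ is immediate, since every sequence is a net. For $(2)\Rightarrow(3)$ I would establish each of the two conclusions by contrapositive. If the BL-property fails, pick $u_0\in E_+$ and a disjoint normalized $(u_n)\subset E_+$ with $\limsup_n\|u_0+u_n\|\le\|u_0\|$; monotonicity of the lattice norm gives $\|u_0+u_n\|\ge\|u_0\|$, so along a subsequence $\|u_0+u_n\|\to\|u_0\|$. Since a disjoint positive sequence is $uo$-null by Corollary~3.6 of \cite{GTX1}, the choice $x_n:=u_0+u_n$, $x_0:=u_0$ produces $x_n\uoc x_0$ with $\|x_n\|\to\|x_0\|$ and $\|x_n-x_0\|=1$, contradicting $(2)$.

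If the norm fails to be order continuous, then $\sigma$-Dedekind completeness together with Meyer--Nieberg's structure theorem supplies an order bounded disjoint positive sequence $(z_n)$ with $\|z_n\|=1$ and a closed lattice copy of $\ell^\infty$ inside $E$. The preferred route is to arrange that the $c_0$-subcopy sitting inside this $\ell^\infty$ is a \emph{regular} sublattice of $E$, so that Theorem~3.2 of \cite{GTX1} forces $c_0$ to be a $\sigma$-BL space, contradicting Example~\ref{c_0 is not sigma-BL-space}. A direct alternative is to set $v:=\sup_n z_n$ (possible by $\sigma$-Dedekind completeness) and $x_n:=v-z_n=\sup_{m\ne n}z_m$: then $x_n\uoc v$, $\|x_n-v\|=1$, and a Fatou-type semicontinuity of the lattice norm along the increasing partial sums $\sum_{m\le N,\,m\ne n}z_m$ delivers $\|x_n\|\to\|v\|$, again contradicting $(2)$.

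For $(3)\Rightarrow(1)$, let $x_\alpha\uoc x_0$ with $\|x_\alpha\|\to\|x_0\|$, and suppose $\|x_\alpha-x_0\|\not\to 0$; passing to a subnet, assume $\|x_\alpha-x_0\|\ge 2\varepsilon$. Then $y_\alpha:=|x_\alpha-x_0|\uoc 0$, so $y_\alpha\wedge|x_0|\to 0$ in order, and order continuity of the norm promotes this to norm convergence. Hence $\|(y_\alpha-|x_0|)^+\|=\|y_\alpha-y_\alpha\wedge|x_0|\|\ge\varepsilon$ eventually, showing that $y_\alpha$ is asymptotically disjoint from $|x_0|$. A Kadec--Pelczynski-type extraction then produces a countable sub-selection $(\alpha_k)$ and a disjoint normalized positive sequence $(u_k)$, each $u_k$ disjoint from $|x_0|$, approximating $y_{\alpha_k}-y_{\alpha_k}\wedge|x_0|$ in norm. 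Applying the BL-property to $u_0:=|x_0|$ and $(u_k)$ yields $\liminf_k\|\,|x_0|+u_k\|>\|x_0\|$, and unpacking $|x_{\alpha_k}|=|x_0+y_{\alpha_k}|$ via the near-disjointness estimates then gives $\liminf_k\|x_{\alpha_k}\|>\|x_0\|$, the sought contradiction.

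The two principal obstacles I foresee are: (i) in the order continuity half of $(2)\Rightarrow(3)$, verifying the regularity of the Meyer--Nieberg sublattice (or, in the alternative route, the Fatou-type step $\|v-z_n\|\to\|v\|$ in a general $\sigma$-Dedekind complete Banach lattice, where order semicontinuity of the norm is not automatic); and (ii) in $(3)\Rightarrow(1)$, both the extraction of a genuine disjoint normalized positive sequence from the $uo$-null \emph{net} $y_\alpha$ --- which requires a diagonal/countable reduction since the usual disjoint-extraction tools are sequential --- and the careful decomposition of $|x_0+y_{\alpha_k}|$ needed to convert the BL-property's strict inequality on $\|\,|x_0|+u_k\|$ into a matching strict inequality on $\|x_{\alpha_k}\|$, a step complicated by the complex setting, where one must work on moduli rather than on $x_\alpha$ directly.
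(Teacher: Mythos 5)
Your architecture ($(1)\Rightarrow(2)\Rightarrow(3)\Rightarrow(1)$) and your treatment of the BL-property half of $(2)\Rightarrow(3)$ match the paper exactly. The genuine gap is in the order-continuity half of $(2)\Rightarrow(3)$: both of your routes stall at precisely the obstacles you yourself flag, and neither obstacle is removable as stated. The Meyer--Nieberg copy of $\ell^\infty$ (hence of $c_0$) is a closed sublattice but there is no reason for it to be \emph{regular} in $E$, so \cite[Thm.3.2]{GTX1} does not apply; and your alternative $x_n:=v-z_n$ requires $\|x_n\|\to\|v\|$, which is a Fatou-type semicontinuity that a general $\sigma$-Dedekind complete Banach lattice need not have --- indeed, if the norm is not order continuous one should expect it to fail. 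The paper sidesteps this entirely with a small but essential trick: it sets $x_0=\bigvee_k e_k$ and $x_n=\alpha_{2n}e_{2n}+\bigvee_{k\ne n,\,k\ne 2n}e_k$, and \emph{chooses} the scalar $\alpha_{2n}\ge 1$ (by the intermediate value theorem, since the norm of $x_n$ is continuous and unbounded in $\alpha_{2n}$ and is $\le\|x_0\|$ at $\alpha_{2n}=1$) so that $\|x_n\|=\|x_0\|$ holds \emph{exactly}, with no semicontinuity needed; one still has $x_n\uoc x_0$ because $x_n-x_0=(\alpha_{2n}-1)e_{2n}-e_n$ is a sum of two disjoint (hence $uo$-null) sequences, while $\|x_n-x_0\|\ge\|e_n\|=1$. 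You need some such device; without it this implication is not proved.

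Your $(3)\Rightarrow(1)$ is the right strategy and essentially the paper's, but it is only a sketch at the one point where the real work lies: the ``Kadec--Pelczynski-type extraction.'' The paper does this concretely: after reducing to $x_\alpha\ge 0$ with $\|x_\alpha\|=\|x\|=1$ and $\|(x_\alpha-x)^-\|\to 0$, it sets $w_\alpha=(x_\alpha-x)^+$, uses $w_\alpha\uoc 0$ plus order continuity to select indices with $\|w_\alpha\wedge(w_{\alpha_1}+\dots+w_{\alpha_n})\|\le 2^{-n}$ for $\alpha\ge\alpha_{n+1}$, and then disjointifies via $\omega_{\alpha_n}=\bigl(w_{\alpha_n}-\sum_{k\ne n}w_{\alpha_n}\wedge w_{\alpha_k}\bigr)^+$, checking $\|w_{\alpha_n}-\omega_{\alpha_n}\|\le n2^{-n+1}$. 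Note also that you are aiming for more than you need: the BL-property only requires $(u_k)_{k\ge 1}$ to be pairwise disjoint, not disjoint from $u_0=|x_0|$, so you should drop the (harder, and unnecessary) requirement that the extracted sequence be disjoint from $|x_0|$; the paper applies the BL-property to $u_0=M^{-1}x$ and $u_n=\|\omega_{\alpha_n}\|^{-1}\omega_{\alpha_n}$ and closes the contradiction by computing $\|x+\omega_{\alpha_n}\|\approx\|x+w_{\alpha_n}\|\approx\|x_{\alpha_n}\|\to 1$.
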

\begin{proof}
$(1)\Rightarrow (2)$ It is trivial.

$(2)\Rightarrow (3)$ We show first that $E$ has the $BL$-property.
Notice that, in this part of the proof, the $\sigma-$Dedekind
completeness of $E$ will not be used. Suppose that there exist a
disjoint normalized sequence $(u_n)_{n=1}^{\infty}$ in $E_+$ and
$u_0\in E_+$ with $\limsup\limits_{n\to\infty}\|u_0+u_n\|=\|u_0\|$.
Since $\|u_0+u_n\|\ge\|u_0\|$, then
$\lim\limits_{n\to\infty}\|u_0+u_n\|=\|u_0\|$. Denote
$v_n:=u_0+u_n$. By \cite[Cor.3.6]{GTX1}, $u_n\uoc 0$ and hence
$v_n\uoc u_0$. Since $E$ is a $\sigma$-BL-space and
$\lim\limits_{n\to\infty}\|v_n\|=\|u_0\|$, then $\|v_n-u_0\|\to 0$,
which is impossible in view of
$\|v_n-u_0\|=\|u_0+u_n-u_0\|=\|u_n\|=1$.

Assume that the norm in $E$ is not order continuous. Then, by the Fremlin--Meyer-Nieberg theorem
(see e.g. \cite[Thm.4.14]{AB1}) there exist $y\in E_+$ and a disjoint sequence $e_k\in[0,y]$
such that $\|e_k\|\not\to 0$. Without lost of generality, we may assume $\|e_k\|=1$ for all $k\in\mathbb{N}$.
By the $\sigma-$Dedekind completeness of $E$, for any sequence $\alpha_n\in\mathbb{R}_+$, there exist the following
vectors
\begin{equation}\label{2.1}
  x_0=\bigvee\limits_{k=1}^{\infty}e_k, \ \ \  x_n=\alpha_{2n}e_{2n}+\bigvee\limits_{k=1,k\ne n,k\ne 2n}^{\infty}e_k
    \ \ \ \ \ \ (\forall n\in \mathbb{N}).
\end{equation}
Now, we choose $\alpha_{2n}\ge 1$ in (\ref{2.1}) such that $\|x_n\|=\|x_0\|$ for all $n\in \mathbb{N}$.
Clearly, $x_n\uoc x_0$. Since $E$ is a $\sigma$-BL-space, then $\|x_n-x_0\|\to 0$, violating
$$
  \|x_n-x_0\|=\|(\alpha_{2n}-1)e_{2n}-e_n\|=\|(\alpha_{2n}-1)e_{2n}+e_n\|\ge\|e_n\|=1.
$$
The obtained contradiction shows that the norm in $E$ is order continuous.

$(3)\Rightarrow (1)$
If $E$ is not a Brezis -- Lieb space, then there exists a net $(x_\alpha)_{\alpha\in A}$ in $E$
such that $x_\alpha\uoc x$ and $\|x_\alpha\|\to\|x\|$, but $\|x_\alpha-x\|\not\to 0$.
Then $|x_\alpha|\uoc |x|$ and $\||x_\alpha|\|\to\||x|\|$.

Notice that $\||x_\alpha|-|x|\|\not\to 0$. Indeed, if $\||x_\alpha|-|x|\|\to 0$,
then, for any $\varepsilon>0$, $(|x_\alpha|)_{\alpha\in A}$ is eventually in $[-|x|,|x|]+\varepsilon B_E$.
Thus $(|x_\alpha|)_{\alpha\in A}$, and hence $(\mathbb{Re}[x_\alpha])_{\alpha\in A}$ and $(\mathbb{Im}[x_\alpha])_{\alpha\in A}$
are both almost order bounded. Since $E$ is order continuous and $x_\alpha\uoc x$, then $\mathbb{Re}[x_\alpha]\uoc\mathbb{Re}[x]$
and $\mathbb{Im}[x_\alpha]\uoc\mathbb{Im}[x]$. By \cite[Pop.3.7.]{GX1}, $\|\mathbb{Re}[x_\alpha-x]\|\to 0$ and
$\|\mathbb{Im}[x_\alpha-x]\|\to 0$, and hence $\|x_\alpha-x\|\to 0$, that is impossible.
Therefore, without lost of generality, we may assume $x_\alpha\in E_+$ and,
by normalizing, also $\|x_\alpha\|=\|x\|=1$ for all $\alpha$.

Passing to a subnet, denoted by $x_\alpha$ again, we may assume
\begin{equation}\label{2.2}
  \|x_\alpha-x\|>C>0 \ \ \ (\forall \alpha\in A).
\end{equation}
Notice that $x\ge(x-x_\alpha)^+=(x_\alpha-x)^-\uoc 0$, and hence $(x_\alpha-x)^-\oc 0$.
The order continuity of the norm ensures
\begin{equation}\label{2.3}
  \|(x_\alpha-x)^-\|\to 0.
\end{equation}
Denoting $w_\alpha=(x_\alpha-x)^+$ and using (\ref{2.2}) and (\ref{2.3}), we may also assume
\begin{equation}\label{2.4}
  \|w_\alpha\|=\|(x_\alpha-x)^+\|>C \ \ \ (\forall \alpha\in A).
\end{equation}
In view of (\ref{2.4}), we obtain
\begin{equation}\label{2.5}
  2=\|x_\alpha\|+\|x\|\ge\|(x_\alpha-x)^+\|=\|w_\alpha\|>C \ \ \ (\forall \alpha\in A).
\end{equation}
Since $w_\alpha\uoc(x-x)^+=0$, then, for any fixed $\beta_1,\beta_2,...,\beta_n$,
\begin{equation}\label{2.6}
  0\le w_{\alpha}\wedge(w_{\beta_1}+w_{\beta_2}+...+w_{\beta_n})\oc 0 \ \ \ \ (\alpha\to\infty).
\end{equation}
Since $x_\alpha\uoc x$, then $x_\alpha\wedge x\uoc x\wedge x=x$ and so $x_\alpha\wedge x\oc x$. By the order continuity of the norm,
there is an increasing sequence of indices $\alpha_n$ in $A$ with
\begin{equation}\label{2.7}
  \|x-x_\alpha\wedge x\|\le 2^{-n} \ \ \ \ (\forall \alpha\ge\alpha_n).
\end{equation}
Furthermore, by (\ref{2.6}), we may also suppose that
\begin{equation}\label{2.8}
  \|w_{\alpha}\wedge(w_{\alpha_1}+w_{\alpha_2}+...+w_{\alpha_n})\|\le 2^{-n}\ \ \ \ (\forall\alpha\ge\alpha_{n+1}).
\end{equation}
Since
$$
  \sum\limits_{k=1,k\ne n}^{\infty}\|w_{\alpha_n}\wedge w_{\alpha_k}\|\le
    \sum\limits_{k=1}^{n-1}\|w_{\alpha_n}\wedge (w_{\alpha_1}+...+w_{\alpha_{n-1}})\|+
$$
$$
    \sum\limits_{k=n+1}^{\infty}\|w_{\alpha_k}\wedge (w_{\alpha_1}+...+w_{\alpha_{k-1}})\|\le
    (n-1)\cdot 2^{-n+1}+\sum\limits_{k=n+1}^{\infty}2^{-k+1}=n2^{-n+1},
    \eqno(2.9)
$$
the series $\sum\limits_{k=1,k\ne n}^{\infty}w_{\alpha_n}\wedge w_{\alpha_k}$ converges absolutely and hence in norm for any $n\in\mathbb{N}$. Take
$$
  \omega_{\alpha_n}:=\bigg(w_{\alpha_n}-\sum\limits_{k=1,k\ne n}^{\infty}w_{\alpha_n}\wedge w_{\alpha_k}\bigg)^+
    \ \ \ \ (\forall n\in\mathbb{N}).
    \eqno(2.10)
$$
First, we show that the sequence $(\omega_{\alpha_n})_{n=1}^{\infty}$ is disjoint. Let $m\ne p$, then
$$
  \omega_{\alpha_m}\wedge\omega_{\alpha_p}=
    \bigg(w_{\alpha_m}-\sum\limits_{k=1,k\ne m}^{\infty}w_{\alpha_m}\wedge w_{\alpha_k}\bigg)^+\wedge
    \bigg(w_{\alpha_p}-\sum\limits_{k=1,k\ne p}^{\infty}w_{\alpha_p}\wedge w_{\alpha_k}\bigg)^+\le
$$
$$
    (w_{\alpha_m}-w_{\alpha_m}\wedge w_{\alpha_p})^+\wedge(w_{\alpha_p}-w_{\alpha_p}\wedge w_{\alpha_m})^+=
$$
$$
    (w_{\alpha_m}-w_{\alpha_m}\wedge w_{\alpha_p})\wedge(w_{\alpha_p}-w_{\alpha_m}\wedge w_{\alpha_p})=0.
$$
By (2.9),
$$
  \|w_{\alpha_n}-\omega_{\alpha_n}\|=\bigg\|w_{\alpha_n}-\bigg(w_{\alpha_n}-\sum\limits_{k=1,k\ne n}^{\infty}w_{\alpha_n}\wedge w_{\alpha_k}\bigg)^+\bigg\|=
$$
$$
  \bigg\|w_{\alpha_n}-\bigg(w_{\alpha_n}-w_{\alpha_n}\wedge\sum\limits_{k=1,k\ne n}^{\infty}w_{\alpha_n}\wedge w_{\alpha_k}\bigg)\bigg\|=
  \bigg\|w_{\alpha_n}\wedge\sum\limits_{k=1,k\ne n}^{\infty}w_{\alpha_n}\wedge w_{\alpha_k}\bigg\|\le
$$
$$
  \|\sum\limits_{k=1,k\ne n}^{\infty}w_{\alpha_n}\wedge w_{\alpha_k}\|\le n2^{-n+1}.\ \ \ \ (\forall n\in\mathbb{N}).
    \eqno(2.11)
$$
Combining (2.11) with (\ref{2.5}) gives
$$
  2\ge\|w_{\alpha_n}\|\ge\|\omega_{\alpha_n}\|\ge C-n2^{-n+1}\ \ \ \ (\forall n\in\mathbb{N}).
  \eqno(2.12)
$$
Passing to further increasing sequence of indices, we may assume that
$$
  \|w_{\alpha_n}\|\to M\in[C,2] \ \ \ \ (n\to\infty).
$$
Now
$$
  \lim\limits_{n\to\infty}\bigg\|M^{-1}x+\|\omega_{\alpha_n}\|^{-1}\omega_{\alpha_n}\bigg\|=
    M^{-1}\lim\limits_{n\to\infty}\|x+\omega_{\alpha_n}\|=[\text{by} \ (2.11)]=
$$
$$
    M^{-1}\lim\limits_{n\to\infty}\|x+w_{\alpha_n}\|=[\text{by} \ (2.3)]=
    M^{-1}\lim\limits_{n\to\infty}\|x+(x_{\alpha_n}-x)\|=
$$
$$
    M^{-1}\lim\limits_{n\to\infty}\|x_{\alpha_n}\|=M^{-1}=\|M^{-1}x\|,
$$
violating the Brezis -- Lieb property for $u_0=M^{-1}x$ and $u_n=\|\omega_{\alpha_n}\|^{-1}\omega_{\alpha_n}$,
$n\ge 1$. The obtained contradiction completes the proof.
\end{proof}
Since every order continuous Banach lattice is Dedekind complete, the following result
is a direct consequence of Theorem \ref{BLS}.
\begin{cor}\label{o-cont BLS}
For an order continuous Banach lattice $E$, the following conditions are equivalent$:$\\
$(1)$ $E$ is a BL-space$;$\\
$(2)$ $E$ is a $\sigma-$BL-space$;$\\
$(3)$ $E$ has the BL-property.
\end{cor}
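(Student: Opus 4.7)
The plan is to derive the corollary as an almost immediate consequence of Theorem \ref{BLS}. The only thing that needs verification is that the hypotheses of Theorem \ref{BLS} are met and that clause $(3)$ of that theorem collapses, under our stronger standing assumption, to the clause $(3)$ stated in the corollary.

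First I would invoke the standard fact (see, e.g., \cite[Thm.4.9]{AB1}) that every order continuous Banach lattice is Dedekind complete, and in particular $\sigma$-Dedekind complete. Hence $E$ satisfies the hypothesis of Theorem \ref{BLS}, so the equivalence of conditions $(1)$--$(3)$ in that theorem is at our disposal.

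Next I would observe that condition $(3)$ of Theorem \ref{BLS} asserts two things: that $E$ has the BL-property and that the norm on $E$ is order continuous. Under the present hypothesis, the second of these is already built into the definition of $E$, so condition $(3)$ of Theorem \ref{BLS} reduces to the statement ``$E$ has the BL-property'', which is precisely condition $(3)$ of the corollary. Combining this with the equivalences supplied by Theorem \ref{BLS} yields the desired chain $(1)\Leftrightarrow(2)\Leftrightarrow(3)$.

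There is no serious obstacle here; the only subtlety worth flagging is that order continuity of the norm in $E$ is genuinely stronger than $\sigma$-Dedekind completeness, which is what allows the collapse of clause $(3)$. In particular, this corollary applies to all reflexive Banach lattices and to $L^p(\mu)$ for $1\le p<\infty$, thereby recovering Proposition \ref{BL for nets in L^p} once the BL-property for these spaces has been verified.
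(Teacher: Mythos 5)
Your proof is correct and follows exactly the paper's route: the corollary is deduced from Theorem \ref{BLS} via the fact that every order continuous Banach lattice is Dedekind complete, with the order-continuity clause of condition $(3)$ absorbed into the hypothesis. Nothing further is needed.
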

Corollary \ref{o-cont BLS} applied to the order continuous Banach
lattices $L^p$ \ $(1\le p<\infty)$ gives Proposition \ref{BL for
nets in L^p}.

We do not know where or not implication $(2)\Rightarrow (3)$ of Theorem \ref{BLS} holds true without the assumption
that the Banach lattice $E$ is $\sigma-$Dedekind complete. More precisely:
\begin{ques}\label{Question 1}
Does every $\sigma-$Brezis -- Lieb Banach lattice have an order continuous norm?
\end{ques}
In the proof of $(2)\Rightarrow (3)$ of Theorem \ref{BLS}, the
$\sigma-$Dedekind completeness of $E$ has been used only for showing
that $E$ has an order continuous norm. So, any $\sigma-$Brezis --
Lieb Banach lattice has the Brezis -- Lieb property. Therefore, for
answering in positive the question of possibility to drop
$\sigma-$Dedekind completeness assumption in Theorem \ref{BLS}, it
suffices to answer in positive the following question.
\begin{ques}\label{Question 2}
Does the BL-property imply order continuity of the norm in the underlying Banach lattice?
\end{ques}
In the end of the paper, we mention one more question closely related to the question in the last sentence of Example \ref{c_0 is not sigma-BL-space}.
\begin{ques}\label{Question 3}
Does the BL-property of a Banach lattice $E$ ensure that $E$ is a KB-space?
\end{ques}

\end{document}